\documentclass[10pt,a4paper]{amsart}
\usepackage[margin=15mm]{geometry}

\usepackage{epsfig}
\usepackage{amssymb,amsmath,amsthm}
\usepackage[all]{xypic}
\usepackage[numbers]{natbib}
\usepackage[colorlinks]{hyperref}
\newtheorem{theorem}{Theorem}[section]

\newtheorem{definition}{Definition}[section]
\newtheorem{example}{Example}[section]

\newtheorem{lemma}{Lemma}[section]

\theoremstyle{remark}
\newtheorem{remark}{Remark}[section]

%
%
%
\newcommand{\x}{\boldsymbol{x}}

%
%
%

\newcommand{\R}{\mathbb{R}}

\newcommand{\p}{\mathbb{P}}
%
%
%
%

%
%
%
%
\newcommand{\Span}[1]{\mathrm{Span}\left\langle#1\right\rangle}


%

%

%

%

\DeclareMathOperator{\dd}{d}%
%
%
%

\newcommand{\CC}{\mathcal{C}}
%
%
%
%

\newcommand{\Th}{^\textrm{th}}

\title{Geometry of the free--sliding Bernoulli beam}
\author{Giovanni Moreno and Monika Ewa Stypa}



\begin{document}

\maketitle

\begin{abstract}
If a variational problem comes with no boundary conditions prescribed beforehand, and yet these arise as a consequence of  the variation process itself, we speak of a \emph{free boundary values variational problem}.   Such is, for instance, the problem of finding the shortest curve whose endpoints can slide along two prescribed curves. There exists a rigorous geometric way to formulate this sort of problems on smooth manifolds with boundary, which we review here in a   friendly self--contained way. As an application, we study a particular  free boundary values variational problem,  the \emph{free--sliding Bernoulli beam}.\par
This paper is dedicated to the memory of prof. Gennadi Sardanashvily.
\end{abstract}

\tableofcontents

\section{Introduction}

The Euler--Lagrange equations are not the only conditions satisfied by  a solution to a variational problem. In the majority of the cases we do not see these additional conditions simply because they become trivial.  There exist, however, important and physically significant examples, where they are by no means trivial and rather play a key role in the description of the solutions to the variational problem itself.\par
We shall call these conditions \emph{natural boundary conditions}. Our   aim     is to obtain the natural boundary conditions for a second--order one--dimensional variational problem henceforth referred to as the \emph{free--sliding Bernoulli beam}. But  such a particular result will be   framed against   a general coordinate--free background.  The  class of problems where natural boundary conditions arise and are nontrivial is rather vast, and we may  call it the class of \emph{free boundary values variational problems} (see, e.g.,  \cite{MR1368401},  Chapter 2,  Section 4,   and  \cite{MR2004181}, Chapter 7). The      geometric framework  sketchy reviewed  below  is valid in the entire  class.\par
The idea of a free--sliding Bernoulli beam is implicit in many classical treatments of variational calculus (see, e.g., C. Lanczos \cite{MR0431821}, Chapter II, Section 15). However,  the problem itself has never been discussed in details and, in particular, the corresponding  natural boundary conditions have  never been derived before---to the authors' best knowledge. On the theoretical side, a solid geometric framework for free boundary values variational problems can be easily obtained by generalising any of  the various modern homological approaches to variational calculus, which all  follow in spirit the original works by Janet \cite{Janet:LSDP} and Dedeker \cite{Ded53}. The key ingredient is  the so--called \emph{relative homology}, i.e., the de Rham theory adapted to manifolds with boundary (see \cite{MR3458999} and references therein). This is a rather straightforward step, but it may  be hard to grasp for a newcomer, as it requires a minimal toolbox of homological algebra and differential topology. This is why we review it below, in a friendly but rigorous way, especially designed to fit   the main example at hand. %
\subsection{Formulation of the main problem}\label{secFormMainProb}
If $u=u(x)$ and we denote by $p,q,r,s$ the first, second, third and fourth derivative of $u$ with respect to $x$, respectively, then the one--form
\begin{equation}\label{eqLagBernoulli}
\lambda=\left(\kappa\frac{q^2}{2}-\rho u\right)dx
\end{equation}
may be understood as a second--order Lagrangian. It is in fact  a well--known variational principle, discovered around 1750. It allows    to describe   a massive beam which bends  under its own weight and against its internal reaction forces (see, e.g.,  \cite{MR0010851}). Nowadays we speak of a ``Bernoulli beam''. In compliance with the terminology used by both \cite{MR2004181} and \cite{MR0431821}, $\kappa$ is a nonzero constant, and $\rho=\rho(x)$ is a function, representing the elasticity of the beam and the load, respectively.\par
The Euler--Lagrange equations associated with \eqref{eqLagBernoulli} are easily computed, viz.
\begin{equation}\label{eqELprimordiali}
s=\frac{1}{\kappa}\rho\, .
\end{equation}
The  general solution 
\begin{equation}\label{eqELsemplice}
u(x)=u_0(x)+c_3x^3+c_2x^2+c_1x^1+c_0
\end{equation}
to \eqref{eqELprimordiali} depends on four integration constants. In the most typical situations, the circumstances provide the correct number of boundary conditions, so that  the solution to \eqref{eqELprimordiali} becomes unique (see, e.g., \cite{MR2004181}, Section 7.1 and \cite{MR0431821}, Chapter II, Section 15). And even when the circumstances \emph{do not} prescribe enough boundary conditions, more   arise as a consequence of the variation problem itself.  An important example of such   phenomenona is provided by the so--called \emph{cantilever beam} (see, e.g.,  \cite{MR2004181}, Section 7.1, Case II).\par 
The free boundary values variational problem  studied in this paper can be described as follows. The endpoints of the Bernoulli beam $L$ can slide freely, that is frictionlessly, along a prescribed curve $\Gamma$  (see Figure  \ref{fig1}). The curve  $\Gamma$ is   assumed henceforth to be the boundary of a connected domain $E\subset\R^2$, i.e., $\Gamma=\partial E$. Our goal is to show that the four--parametric family of solutions \eqref{eqELsemplice} reduces to a generically two--parametric  family of solutions as a consequence of the fact that we have forced the endpoints of $L$ to lie on $\Gamma$. This program is carried out in the last Section \ref{secSECONDA}.\par The problem of how the geometry of $\Gamma$ can further reduce the multitude of the solutions---possibly all the way down to uniqueness, or even to non--existence---is an interesting one, but not touched upon here. \par
The reader may have noticed that, as opposed to the ``classical'' treatment of the Bernoulli beam, which is described in terms of a function $u$, here we deal with one--dimensional submanifolds (i.e., curves) of the two--dimensional manifold with boundary $E$. This is precisely why we need a coordinate--free approach to free boundary values variational problems on manifolds with boundary. The following example, already worked out in \cite{MR2757930}, clarifies this important step. 

\begin{figure}[h]
\epsfig{file=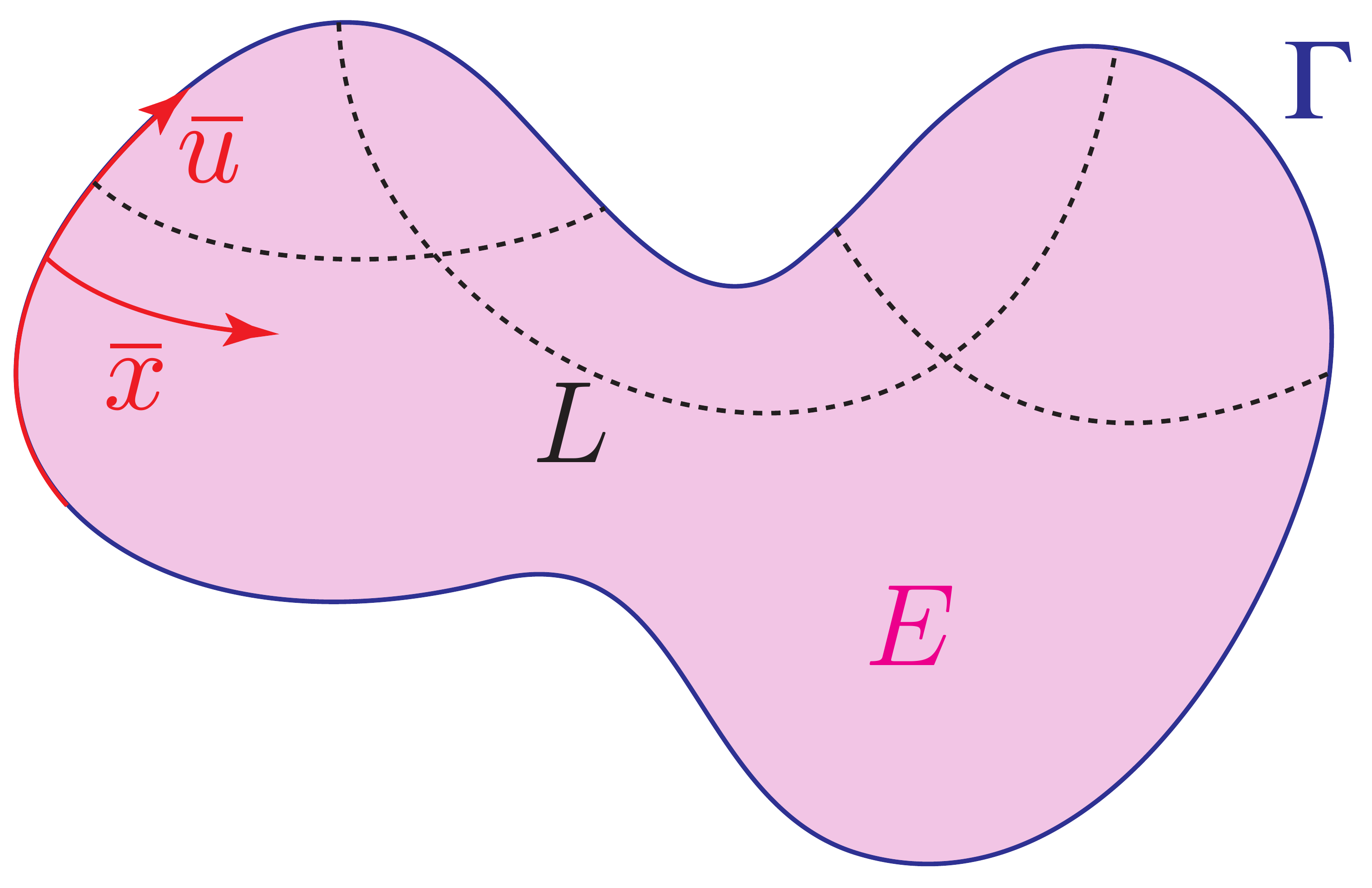,width=0.5\textwidth}\caption{Two arbitrary points of the closed curve $\Gamma$ can be joined by a beam $L\subset E$. If such points are assumed to be freely sliding along $\Gamma$, then the corresponding variational problem is a \emph{free boundary values variational problem}.\label{fig1}}
\end{figure}

 \subsection{A motivating example}\label{secMot}
Let $\R^2$ be equipped with the Euclidean metric, and let $\gamma:[a,b]\to\R^2$ be a   curve. Then the length of $\gamma$ is given by 
\begin{equation}\label{eqLenFun}
\ell(\gamma)=\int_a^b \|\dot{\gamma}(t)\| dt\, .
\end{equation}
It is a basic undergraduate exercise to show that $\gamma$ is a critical point for $\ell$ if and only if $\gamma$ is a straight line. In particular, this means that the set of solutions to the Euler--Lagrange equation associated with \eqref{eqLenFun} is a two--parametric family.\par
Straight lines are the correct answer to   the question ``what are the shortest lines between two arbitrary points of $\R^2$?"\par
But, what if we are interested in the shortest lines joining two arbitrary points \emph{lying on the boundary $\Gamma=\partial E$ of a connected domain $E\subset\R^2$} instead? Intuitively, since any pair of points of $\Gamma$ is a pair of points of $\R^2$, the Euler--Lagrange equations need to be satisfied, but new equations are needed, and these extra equations must depend on the particular choice of $E$. Obvious.\par
However, formalising this fact in a proper way requires an unexpectedly elaborated framework, even if the idea behind is extremely simple (see \cite{MR2456137,MorenoCauchy,MR2757930}).  The case when $E=[a,b]\times\R^2$ is particularly simple, and deserves to be discussed further. First of all, since we are  interested\footnote{The  examples  really worth studying are those where the boundary $\partial E$ is \emph{not} a solution to the Euler--Lagrange equations.} in lines joining a point of $\{a\}\times\R$ with a point of $\{b\}\times\R$, it suffices to consider curves of the form $(x,u(x))$, i.e., functions on $[a,b]$. Hence,  \eqref{eqLenFun} reads
\begin{equation}\label{eqLenFunLOC}
\ell(u)=\int_a^b \sqrt{1+p^2}dx\, .
\end{equation}
In order to get the Euler--Lagrange equations, we compute the variation
\begin{align}
\frac{\delta\ell}{\delta \overline{u}}(u)&= \left.\frac{\dd  }{\dd \epsilon} \right|_{\epsilon=0}\ell(u+\epsilon \overline{u})\nonumber\\
&=\int_a^b \frac{ p }{\sqrt{1+p^2}}\overline{u}'dx\nonumber\\
&=-\int_a^b \frac{\dd}{\dd x} \left(\frac{ p }{\sqrt{1+p^2}}\right)\overline{u}dx+\left. \frac{ p }{\sqrt{1+p^2}} \overline{u}\right|_a^b   \, .\label{eqELsemplicissima}
\end{align}
Now observe that the boundary $\partial E$ is the ``space of admissible boundary values'' for the unknown function $u\in C^\infty([a,b])$. Hence, we speak of a \emph{free boundary values variational problem} precisely because these values are not prescribed beforehand.  But, if a curve/function $u$ is critical for \eqref{eqLenFunLOC} for \emph{arbitrary} boundary values, then, in particular, it must be critical for \eqref{eqLenFunLOC} for \emph{prescribed}  boundary  values.  In practice, this means that \eqref{eqELsemplicissima} must vanish for all $\overline{u}$ such that $\overline{u}(a)=\overline{u}(b)=0$, i.e., that the second--order Euler--Lagrange equation $q=0$ must be satisfied by $u$. Expectedly, we obtain   all polynomial functions of degree $\leq 1$, which is a two--parametric family (see Figure \ref{fig2} below).\par
Needless to say, polynomial functions of degree $\leq 1$ do not answer the question ``what are the shortest lines joining a point of $\{a\}\times\R$ with a point of $\{b\}\times\R$?''  But the desired functions are among them, that is, the solutions to our problem must, in particular, satisfy the equation $q=0$. It remains to observe that, for functions satisfying $q=0$,  \eqref{eqELsemplicissima} reads
\begin{equation}\label{eqTCsemplicissima}
\frac{\partial\ell}{\partial \overline{u}}(u)=\left. \frac{ p }{\sqrt{1+p^2}} \overline{u}\right|_a^b\, .
\end{equation}
Indeed, the variation \eqref{eqTCsemplicissima} must vanish now for all $\overline{u}$, i.e., we do not need to assume anymore that $u$ and $u+\epsilon \overline{u}$ have the same boundary values for small $\epsilon$. It follows immediately from \eqref{eqTCsemplicissima} that $p(a)=p(b)=0$ and we correctly get only the constant functions. That is, the family of the solution to our free boundary values variational problem is one--parametric, whereas the family of the solutions of the Euler--Lagrange equation $q=0$ alone is two--parametric.\par
Passing to less trivial domains $E$, like a circle or an ellipse (see Figure \ref{fig2} below), we see  that both the quantity and the nature of the solutions to the corresponding free boundary values variational problems  heavily depend on the shape of $E$.  In the case of a  circle, we still get a one--parametric subfamily, but different from the previous example, even topologically. If $E$ is an  ellipse, then we even obtain a discrete family (two elements). \par
These trivial examples should make the reader suspect that there is a deeper theory behind. A theory where the ``new equations''  accompanying the Euler--Lagrange \emph{depend} on $E$. These are precisely the   \emph{natural boundary conditions}.\footnote{In this particular example the natural boundary conditions are usually referred to as the \emph{transversality conditions} (see \cite{MR2004181}, Section 7.3, and \cite{TesiDelCacchio})} Unfortunately, formalising properly this dependency requires a rather heavy jet--theoretical formalism (see \cite{MorenoCauchy}), which would not be in the spirit of this paper. In Section \ref{secPRIMA} below we provide a minimalistic toolbox needed to deal with the main problem (the one described in Section  \ref{secFormMainProb}), by avoiding   general theoretical considerations and focusing on the useful results instead.\par
We close this introduction by   proposing an analogy. The passage from the ``global'' Lagrangian \eqref{eqLenFun} on $\R^2$ to the ``restricted'' Lagrangian \eqref{eqLenFunLOC} on $E$ is morally the same as the passage from   a differential form $\omega$ on a manifold $M$ to its restriction $\omega|_E$ to a submanifold $E\subset M$ of codimension zero.  So, even symbolically, $\omega|_E$ carries a reference to the submanifold $E$, whereas \eqref{eqLenFunLOC} does not. But what really matters is that  the de Rham theory changes completely.\par
More precisely, the notion of \emph{exact forms} is different. A form $\omega\in\Omega^k(M)$ is exact if it is the differential $d\eta$ of a form $\eta\in\Omega^{k-1}(M)$. But the restricted form $\omega|_E $ is exact if it is the differential   of a \emph{relative form}, i.e., an element of $ \Omega^{k-1}(E,\partial E)$. A relative form on $E$ is a form which vanishes on $\partial E$. The corresponding \emph{relative de Rham complex} is indicated by $(\Omega(E,\partial E), d_{\textrm{rel}})$. If $k=\dim E=\dim E$, then  the relative de Rham cohomology  on $E$ has the same $k$--cycles but fewer $k$--boundaries. Since the Lagrangians and the corresponding Euler--Lagrange expressions  can be interpreted as suitable de Rham cohomology classes on jet prolongations of $E$, the passage  from  \eqref{eqLenFun} to  \eqref{eqLenFunLOC} causes, in particular, the Euler--Lagrange expressions to land into a larger cohomology space, which explains the appearence of the natural boundary conditions  (see \cite{MR3458999} for a more exhaustive discussion) next to the classical Euler--Lagrange equations. \par
The profound difference between these two pieces of the same cohomological object should always been kept in mind. The latter are just the restriction of the Euler--Lagrange equations on $M$: as such, no matter which $E$ is chosen, they will look the same in the neihghborhood of any internal point of $E$. The former depend on the choice of $E$:   the same point $p\in M$ may be a boundary point of several  different domains $E$, and for each choice of $E$ the corresponding equations need not to be the same. 

\begin{figure}[h]
\epsfig{file=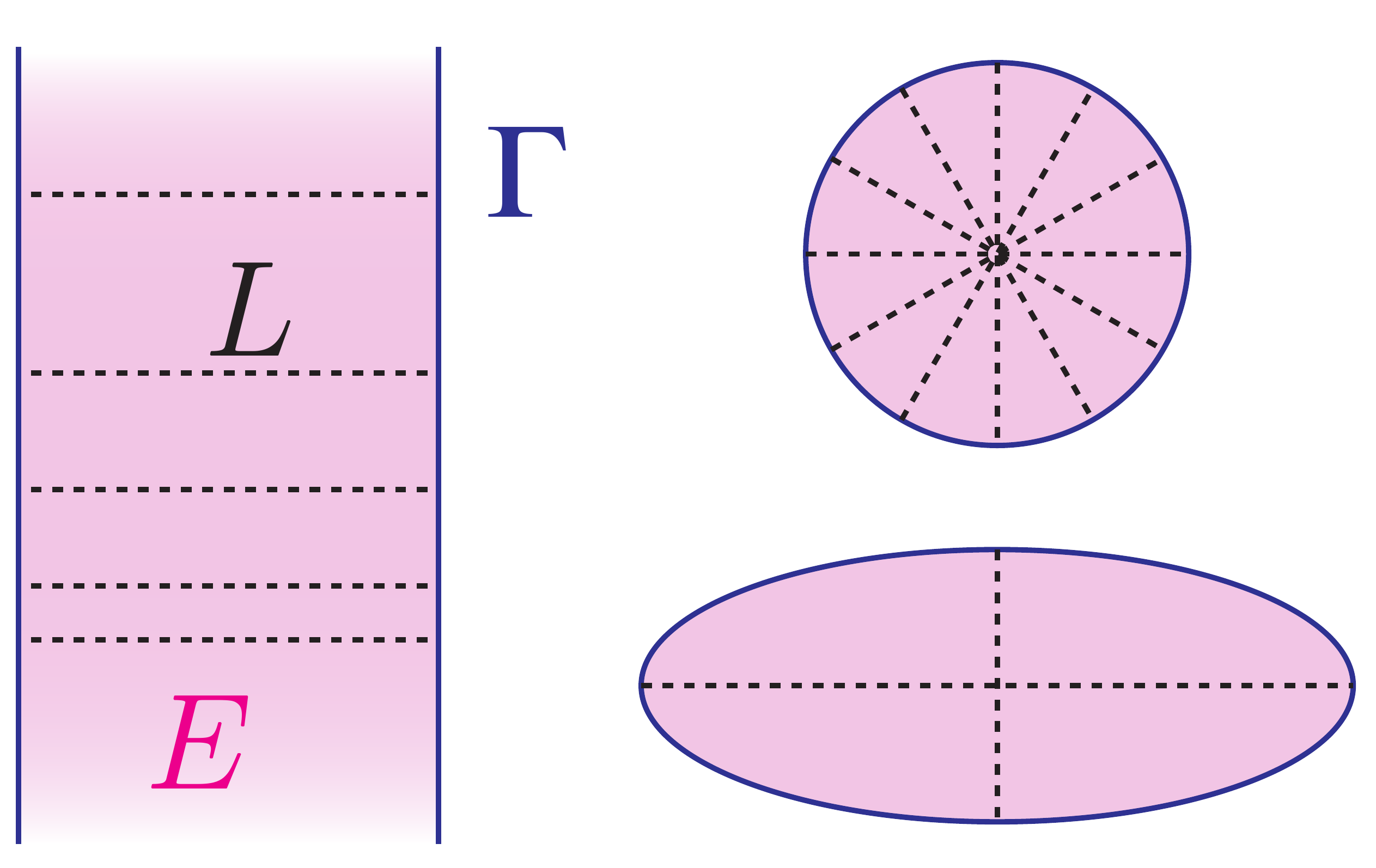,width=0.5\textwidth}\caption{Examples of  free boundary values variational problems.}\label{fig2}
\end{figure}
%
%
%
%
%
\section{The geometry of free boundary values variational problems}\label{secPRIMA}
Let $E$ be a (connected) two--dimensional manifold with (connected) nonempty boundary $\partial E$. In order to speak of a free boundary value variational problem imposed on one--dimensional submanifolds (i.e., curves) in $E$, we need the correct geometric counterpart of several intuitive notions, such as:
\begin{enumerate}
\item admissible curves in $E$,
\item a   Lagrangian on $E$,
\item the Euler--Lagrange equations associated with the Lagrangian,
\item the natural boundary conditions associated with the Lagrangian.
\end{enumerate}
The less familiar entry is surely the last one, which will need a special care. Together with these indispensable gadgets, we shall also need some key property in order to obtain our result, namely:
\begin{enumerate}
\item the locality of the Euler--Lagrange equations,
\item the locality of the natural boundary conditions,
\item the invariance of Euler--Lagrange equations with respect to  diffeomorphisms of $E$,
\item the invariance of natural boundary conditions wth respect to   diffeomorphisms of $E$.
\end{enumerate}
 Below we clarify all these points in the most direct and self--contained way. All will be used in next   Section \ref{secSECONDA} in order to obtain the desired result.

\subsection{Admissible curves and  coordinated patches}\label{subAdmin}
This is the easiest notion. A curve is admissible for a variational problem on $E$ if it is, roughly speaking, transversal to $\partial E$. The reason for that can be easily grasped by thinking at the length Lagrangian \eqref{eqLenFun}: a curve hitting the boundary of $E$ tangentially can never be  length--minimising (see Section \ref{secMot} above). As such, it should be ruled out from the very beginning.
\begin{definition}\label{defAdmCurv}
 A curve $L$ is \emph{admissible} if $\partial L=L\cap\partial E$ and $L$ is nowhere tangent to $\partial E$. The set of admissible curves is denoted by $\mathcal{A}(E)$.
\end{definition}
\begin{example}\label{exEsempioBanale}
If $E=I\times\R$, where $I\subset\R$ is a closed interval, then the admissible curves in $E$ are just the graphs of the smooth functions on $I$ (see Figure \ref{fig2}, leftmost example).
\end{example}
As a manifold with boundary, $E$ possesses two kind of coordinate patches, namely the ordinary (standard, internal) ones, and the boundary ones (like the $(\overline{x},\overline{u})$ patch depicted in Figure \ref{fig1}). A coordinate patch is a pair $(U,\x)$, where $U\subseteq E$ is open and $\x$ is a diffeomorphism between $U$ and $\R^2$ (standard) or $[0,\infty[\times\R$ (boundary).

\subsection{Jet spaces and Lagrangians}
The $k\Th$ jet extension $J^kE$ is a bundle over $E$ whose fibre $J^k_{\x} E$ at $\x\in E$ is defined by
\begin{equation}\label{eqDefJK}
J^k_{\x} E:=\{ L\mid L\textrm{ is a curve in }E\}/\sim^k_{\x}\, ,
\end{equation}
where
\begin{equation*}
L\sim^k_{\x} L'\Longleftrightarrow L\textrm{ is tangent to }L'\textrm{ at }\x\textrm{ with order }k\, .
\end{equation*}
Jet spaces form a natural tower of one--dimensional smooth bundles, usually denoted by
\begin{equation}\label{eqTower}
\cdots\rightarrow J^kE\stackrel{\pi_{k,k-1}}{\longrightarrow} J^{k-1}E\rightarrow\cdots\rightarrow J^1E\rightarrow E\, .
\end{equation}
According to the general theory (see, e.g., \cite{MR989588,MR1670044}), $\pi_{1,0}$ is a smooth $\R\p^1$--bundle, whereas the other are  affine $\R$--bundles.\par
Let $(U,\x)$ be an internal (resp. boundary) coordinate patch, with $\x=(x,u)$. For any function $u=f(x)$ denote by
\begin{equation*}
L_f:=\x^{-1}( \{ (x,u)\mid u=f(x)\, ,\ x\in\R\ (\textrm{resp., }x\in[0,\infty[) \})
\end{equation*}
its   \emph{graph}.  We define the subset
\begin{equation*}
\widetilde{U}:=\{ [L_f]^k_{\x}\mid \x\in U\, ,\ f\in C^\infty(\R)  \ (\textrm{resp., }f\in C^\infty([0,\infty[) )\}\subseteq J^kE\, , 
\end{equation*}
and the map
\begin{align*}
\widetilde{U}\ni [L_f]^k_{\x}\stackrel{\widetilde{\x}}{\longmapsto}& (x,u=f(x), p=f'(x),q=f''(x), r=f'''(x),\\ & s=f^{(iv)}(x),\ldots, u^{(k)}=f^{(k)}(x))
\end{align*}
between $\widetilde{U}$ and $\R^2\times \R^k$ (resp., $[0,\infty[\times\R\times \R^k$). Notice that we have denoted by $(p,q,r,s, u^{(5)}, u^{(6)}, \cdots u^{(k)})$ the coordinates on  $\R^k$. It can be proved that $\widetilde{U}$ is open in $J^kE$.
\begin{definition}
 The pair $(\widetilde{U}, \widetilde{\x})$ is called a ordinary (resp., boundary) \emph{affine coordinate patch} on $J^kE$ (induced by $(U,\x)$).
\end{definition}
When viewed through coordinate patches, the tower of bundles \eqref{eqTower}  is simply
\begin{equation*}
\ldots\longrightarrow E\times\R^k\stackrel{\pi_{k,k-1}}{\longrightarrow} E\times\R^{k-1}\longrightarrow\ldots \, .
\end{equation*}
For any curve $L\subset E$, define its $k\Th$ \emph{jet extension}
\begin{equation*}
L^{(k)}:=\{  [L]_{\x}^k\mid \x\in L\}\subset J^kE\, ,
\end{equation*}
which, as a submanifold of  $J^kE$, is a curve as well (see, e.g., \cite{MR1670044}).  Plainly, not all the curves in $J^kE$ are of the form  $L^{(k)}$, but jet spaces come equipped with a structure which allows precisely to tell one from another. More precisely, fix a  point $\theta:=[L]_{\x}^k\in J^kE$, and observe that the representative $L$ is, of course, not uniquely defined. Such an anbiguity allows us to define the nontrivial subspace
\begin{equation*}
\CC^{(k)}_\theta:=\Span{ T_\theta  L\mid [L]_{\x}^k=\theta}\subset T_\theta J^kE\, .
\end{equation*}
\begin{definition}The distribution $\CC^{(k)}$ given by 
$\theta\longmapsto \CC^{(k)}_\theta$ is called the \emph{contact distribution} on $J^kE$.
\end{definition}
In an affine coordinate patch, $\CC^{(1)}$ is the annihilator of the one--form
\begin{equation}\label{eqContForm1}
du-pdx\, ,
\end{equation}
$\CC^{(2)}$ is the annihilator of the one--form \eqref{eqContForm1}, together with the one--form
\begin{equation}\label{eqContForm2}
dp-qdx\, ,
\end{equation}
$\CC^{(3)}$ is the annihilator of the one--forms \eqref{eqContForm1} and  \eqref{eqContForm2}, together with the one--form
\begin{equation}\label{eqContForm3}
dq-rdx\, ,
\end{equation}
and so on so forth.
\begin{theorem}\label{thTheoremaLie}
 A curve in $J^kE$ is of the form $L^{(k)}$ if and only if it is an integral curve of $\CC^{(k)}$ nondegenerately projecting over $E$.
\end{theorem}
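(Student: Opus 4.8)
The plan is to prove both implications of Theorem~\ref{thTheoremaLie} by working in an affine coordinate patch $(\widetilde{\x})$, since both the property ``being of the form $L^{(k)}$'' and ``being an integral curve of $\CC^{(k)}$ projecting nondegenerately to $E$'' are local and coordinate-independent. Recall that in such a patch $\CC^{(k)}$ is the annihilator of the contact forms $du-p\,dx$, $dp-q\,dx$, $dq-r\,dx,\dots$, up to $du^{(k-1)}-u^{(k)}dx$, and that ``nondegenerate projection over $E$'' means the curve is locally a graph over the $x$-axis, i.e.\ $x$ is a good parameter along it.

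For the forward implication, suppose $\Lambda = L^{(k)}$ for some curve $L\subset E$. Since $L$ is admissible (nowhere tangent to $\partial E$ in the boundary case, and a genuine curve in the internal case), near any of its points $L$ is the graph $L_f$ of a smooth function $u=f(x)$, so $L^{(k)}$ is parametrised by $x\mapsto (x, f(x), f'(x),\dots,f^{(k)}(x))$. This parametrisation manifestly has $x$ as a regular parameter, hence $L^{(k)}$ projects nondegenerately to $E$; and its tangent vector $\partial_x + f'\partial_u + f''\partial_p + \cdots + f^{(k+1)}\partial_{u^{(k)}}$ is annihilated by each contact form $du^{(i)}-u^{(i+1)}dx$ precisely because the coordinate-functions along the curve satisfy $\frac{d}{dx}(f^{(i)}) = f^{(i+1)}$. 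Thus $L^{(k)}$ is an integral curve of $\CC^{(k)}$. (Here one should also note that $\CC^{(k)}$ has rank one away from nothing — it is exactly the line field annihilated by the $k$ independent contact forms inside the $(k+1)$-dimensional $T_\theta J^kE$ — so ``integral curve of $\CC^{(k)}$'' is an honest first-order ODE condition.)

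For the converse, let $\Lambda\subset J^kE$ be an integral curve of $\CC^{(k)}$ that projects nondegenerately over $E$. Nondegenerate projection lets us use $x$ as a parameter and write $\Lambda$ as $x\mapsto (x, u(x), p(x), q(x), r(x),\dots, u^{(k)}(x))$ for smooth functions $u(x), p(x), \dots$. The tangent vector is $\partial_x + u'\partial_u + p'\partial_p + \cdots$; demanding that it be annihilated by $du-p\,dx$ forces $u'(x)=p(x)$, by $dp-q\,dx$ forces $p'(x)=q(x)$, and so on down the tower, giving $u^{(i)}(x) = \frac{d}{dx}u^{(i-1)}(x)$ for all $i\le k$. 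Hence, setting $f := u(x)$, all the higher coordinates are literally the successive derivatives of $f$, so $\Lambda = \{(x,f(x),f'(x),\dots,f^{(k)}(x))\} = (L_f)^{(k)} = L^{(k)}$ with $L=L_f$. One should check the gluing: different coordinate patches produce the same curve $L\subset E$ because the projection $\pi_{k,0}(\Lambda)$ is intrinsically defined, so the locally-constructed graphs patch together into a global curve $L$ with $\Lambda = L^{(k)}$.

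The only genuinely delicate point — and the main obstacle — is the boundary behaviour: when the coordinate patch is a \emph{boundary} patch, $\x$ maps to $[0,\infty[\times\R$ and the representative functions $f$ live in $C^\infty([0,\infty[)$, so one must make sure that ``integral curve of $\CC^{(k)}$ projecting nondegenerately over $E$'' near a boundary point still forces the curve to be $L^{(k)}$ for an \emph{admissible} $L$, i.e.\ one meeting $\partial E$ transversally. This follows because nondegenerate projection over $E$ precisely means the $x$-coordinate (which restricts to $0$ on $\partial E$) is a regular parameter along $\Lambda$, hence $\partial L = L\cap\partial E$ and $L$ is nowhere tangent to $\partial E$; but it is worth stating explicitly, since it is the reason the admissibility hypothesis of Definition~\ref{defAdmCurv} is the right one and the reason the theorem holds verbatim on the manifold-with-boundary $E$. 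Everything else is the standard Lie correspondence between graphs of functions and integral curves of the contact system, carried through the tower \eqref{eqTower} one contact form at a time.
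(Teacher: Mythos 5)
The paper does not actually prove Theorem~\ref{thTheoremaLie}: its ``proof'' is a citation to \cite{MR1670044}. Your argument is the standard one found in that literature (parametrise by $x$ and peel off the contact forms one at a time), and both implications are essentially right. There is, however, one genuine error in your parenthetical remark: $\CC^{(k)}$ is \emph{not} a line field. Since $\dim J^kE=k+2$ (coordinates $x,u,p,\dots,u^{(k)}$), the common kernel of the $k$ independent contact forms $du-p\,dx,\dots,du^{(k-1)}-u^{(k)}dx$ is a rank-\emph{two} distribution, spanned by the truncated total derivative $\partial_x+p\partial_u+\cdots+u^{(k)}\partial_{u^{(k-1)}}$ together with the vertical vector $\partial_{u^{(k)}}$; equivalently, in the paper's definition $\CC^{(k)}_\theta=\Span{T_\theta L\mid [L]^k_{\x}=\theta}$ the freedom in the $(k+1)$-st derivative of the representative $L$ contributes exactly that vertical direction. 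This matters for the logic of the statement: the fibres of $\pi_{k,k-1}$ are integral curves of $\CC^{(k)}$ that are \emph{not} of the form $L^{(k)}$, and the hypothesis of nondegenerate projection over $E$ is there precisely to exclude them; if $\CC^{(k)}$ were a line field that hypothesis would be essentially vacuous. Your converse does not actually rely on the rank-one claim---it uses nondegenerate projection to write the curve as a graph over $x$, after which the contact forms force $u'=p$, $p'=q$, etc.---so the proof survives once the remark is deleted, but the misstatement should be corrected because it obscures why the theorem needs that hypothesis at all.

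A smaller point: in the forward direction you invoke admissibility to conclude that $L$ is locally a graph $u=f(x)$. Definition~\ref{defAdmCurv} only constrains $L$ along $\partial E$; at an interior point the tangent of $L$ may be ``vertical'' in the chosen chart, in which case $[L]^k_{\x}$ does not lie in the affine patch $\widetilde{U}$ (recall $\pi_{1,0}$ is an $\R\p^1$-bundle, and the affine charts cover only the non-vertical directions). One must either exchange the roles of $x$ and $u$ or choose a chart adapted to $L$; this is routine, but as written your sentence proves less than the theorem claims.
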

\begin{proof}
 See, e.g., \cite{MR1670044}.
\end{proof}
 By dualising the tower of projections \eqref{eqTower}, one sees that the modules of differential forms on lower--order jet spaces can be embedded into the modules of differential forms on higher--order jet spaces. In particular, $\Omega^1(E)$ can be considered as a $C^\infty(E)$--submodule in any $\Omega^1(J^kE)$. 
\begin{definition}
The $C^\infty(J^kE)$--submodule $\Omega_h^1(J^kE)$  generated by $\Omega^1(E)$ within $\Omega^1(J^kE)$ is called the module of \emph{$k\Th$ order Lagrangians} on $E$.
\end{definition}
\begin{example}\label{exEsempioBanale2}
 In the setting of Example \ref{exEsempioBanale}, the formula \eqref{eqLagBernoulli}   correctly defines    a second--order Lagrangian on $E$.  Indeed, the one--form $dx$ over $M$ is multiplied by such scalars, like $p$ and $q$, which belong to $C^\infty(J^2E)$. 
\end{example}

\subsection{The Euler--Lagrange equation}
The construction of the Euler--Lagrange equation associated to a (in our context, second--order)   Lagrangian $\lambda\in \Omega_h^1(J^2E)$ has been the subject of dozens of works (see, e.g., \cite{Ded53,Janet:LSDP,MR0394755,MR0377987,Takens:SCLVP,MR0501129,MR590637,MR667492,MR739951,MR739952,MR1262598}), each showing the same phenomenon form a different perspective. We only stress here that  the left--hand side of Euler--Lagrange equation $\delta\lambda=0$ associated with $\lambda$ is a particular two--form on $J^4E$, called the \emph{Euler--Lagrange expression}. More precisely,
\begin{equation}\label{eqModulo11}
\delta\lambda\in \Omega_{h,c}^2(J^4E) \, ,
\end{equation}
where $ \Omega_{h,c}^2(J^4E) $ denotes the submodule of $ \Omega^2(J^4E) $ generated by the products of   horizontal one--forms with   contact one--forms.
\begin{example}\label{exEsempioBanale3}
 In the same setting as Examples \ref{exEsempioBanale} and \ref{exEsempioBanale2}, the Euler--Lagrange expression can be written in terms of the so--called Euler--Lagrange derivative of the Lagrangian density. So, formula \eqref{eqELprimordiali} is correctly interpreted as
 \begin{equation}\label{eqELnobile}
\delta\lambda=\left( s-\frac{1}{\kappa}\rho \right)\otimes (du-pdx)\, ,
\end{equation}
where we used the contact form \eqref{eqContForm1}. The two--form \eqref{eqELnobile} is called, in the bicomplex terminology, a form of type $(1,1)$.
\end{example}
A second--order Lagrangian determines an action functional on the space $\mathcal{A}(E)$ of admissible sections, usually denoted by
\begin{equation}\label{eqDefAcFun}
\boldsymbol{S}_\lambda:L\in\mathcal{A}(E)\longmapsto \int_{L^{(2)}}\lambda\in\R\, .
\end{equation}
\begin{definition}[Main]\label{defMAIN}
 A free boundary values variational problem is a pair $(E,\lambda)$, where $\lambda$ is a Lagrangian over the manifold with boundary $E$. A solution of the problem is an admissible curve $L\in\mathcal{A}(E) $, which is critical for $\boldsymbol{S}_\lambda$.
\end{definition}
Apparently, Definition \ref{defMAIN}  does not differ much from the standard notion of a variational problem,\footnote{Definition \ref{defMAIN} was originally  proposed in \cite{MR3458999}.} save for the fact that $E$ has nonempty boundary (and it is an abstract manifold). Its novelty is hidden in the notion of a \emph{critical point}. Indeed, a point $L$ is critical if the derivative of $\boldsymbol{S}_\lambda$ along all possible infinitesimal variations of $L$ vanishes. But now $L$ is allowed to range within a larger set than usual (see  Definition \ref{defAdmCurv}). Hence, the critical condition breaks down into more equations. \par This is why a solution to a free boundary values variational problem does not fulfil only the Euler--Lagrange equations  $\delta\lambda=0$, but also the natural boundary conditions.

 \subsection{The natural boundary conditions}
Now we can make more precise the first statement  of this paper, i.e., that  the Euler--Lagrange equation is not the unique consequence of the stationarity of the action functional \eqref{eqDefAcFun}.  As already pointed out,  the only way to see this globally is by means of cohomology (i.e., spectral sequences or bicomplexes, see \cite{MR3349926}), which is something we wish to avoid here (see \cite{MorenoCauchy} for a rigorous treatment). The idea is that the cohomology of   $\Omega_{h,c}^2(J^4E) $ (cf. \eqref{eqModulo11})
 should be replaced by its ``relative'' analogue, in such a way that the former is but a (nontrivial) direct summand of the latter (see \cite{MR2456137}), as we outlined     in Section \ref{secMot} above. Rather than giving a rigorous global definition (see \cite{MR3458999}), we develop the particular case discussed in Example \ref{exEsempioBanale3}.
 \begin{example}\label{exEsempioBanale4}
If $I=[a,b]$, then $\mathcal{A}= C^\infty([a,b])$, and \eqref{eqDefAcFun} is simply
\begin{equation}\label{eqAcFunBern}
\boldsymbol{S}_\lambda:u\in C^\infty([a,b])\longmapsto \int_a^b\left(\kappa\frac{(u'')^2}{2}-\rho u\right)dx\in\R\, .
\end{equation}
The classical way to derive the Euler--Lagrange equations from \eqref{eqAcFunBern} is to compute the variation
\begin{align}
\frac{\delta\boldsymbol{S}_\lambda}{\delta v}(u)&=\left.\frac{\dd}{\dd\epsilon}\right|_{\epsilon=0} \int_a^b\left(\kappa\frac{(u''+\epsilon v'')^2}{2}-\rho (u+\epsilon v)\right)dx\nonumber \\
&= \int_a^b\left(\kappa u'' v'' -\rho v \right)dx \nonumber\\
&=\int_a^b\left(\kappa u^{(iv)}   -\rho   \right)vdx+\left.(\kappa u''v'-\kappa u'''v)\right|_a^b\label{eqEquazVar}
\end{align}
and to impose that it vanishes for all $v$ with compact support in $]a,b[$. However, in our case, \emph{all} values for $v$ need to be taken into account, since $u+v$ is always admissible. \par
Nevertheless, we may start by imposing the vanishing of   \eqref{eqEquazVar} for all $v$ with compact support in $]a,b[$, i.e., by discarding the second summand (exactly as we did for the toy model, see Section \ref{secMot}). It should also be noticed that the integrand in \eqref{eqEquazVar} is the outcome of the contraction of the two--form \eqref{eqELnobile}  of type $(1,1)$ with the vertical vector field $v\partial_u$, so that the vanishing of the integral for arbitrary $v$ implies \eqref{eqELnobile}. In other words, the Euler--Lagrange equation  \eqref{eqELprimordiali} needs to be satisfied.\par
But now, plugging   \eqref{eqELprimordiali} into \eqref{eqEquazVar}, only the boundary term survives. If we take variations with respect to arbitrary $v$, we immediately see that the four natural boundary conditions 
\begin{equation}\label{eqBoundCondPiatte}
u''(a)=u''(b)=0\, ,\ u'''(a)=u'''(b)=0\, ,
\end{equation}
need to be satisfied by a stationary point of \eqref{eqAcFunBern}. If the boundary conditions \eqref{eqBoundCondPiatte} are prescribed to the   general solution \eqref{eqELsemplice}, the number of integration constants decreases, but not enough to ensure uniqueness.\par
The boundary conditions \eqref{eqBoundCondPiatte} are precisely the ones labeled by 215.5 in \cite{MR0431821}. It is worth noting that in \cite{MR0431821} (see the summary at the end of Section 15 of Chapter II), as well as in similar textbooks, it is erroneously stated that for free boundary values problem the right amount of missing boundary conditions is provided by the variation process itself. In fact the variation process does produce new conditions, but not always (like here) enough to guarantee a unique solution.\par
In the cohomological framework presented in \cite{MR2456137}, the ``relative Euler--Lagrange expression'' $\delta_{\textrm{rel}}\lambda$ (analogous to the relative de Rham differential, see Section \ref{secMot} above)  splits into two parts. One is precisely the $\delta\lambda$ appearing in \eqref{eqELnobile}, and the other is a certain object whose vanishing corresponds precisely to the four equations  \eqref{eqBoundCondPiatte}  in local coordinates. These are   the natural boundary conditions associated to the Lagrangian \eqref{eqLagBernoulli} and the particular domain $E$ discussed in this example. \end{example}
We did not insists on the cohomological nature of \eqref{eqBoundCondPiatte}, just because the main feature needed here is their invariance with respect to   diffeomorphisms, discussed below. Indeed, in order to write down  the natural boundary conditions   for an arbitrary domain with boundary $E\subset\R^2$ (like the one depicted in  Figure \ref{fig1}), we need to choose a coordinate patch and pull--back the variational problem to this coordinate patch. Locality and invariance are the properties which guarantee the correctedness of the process.   
\subsection{Locality of the relative Euler--Lagrange equations}
The local character of the Euler--Lagrange equations is perhaps the main feature of the equations themselves, accompanying them since their inception. It reflects the old say that ``geodesics are locally minimising curves''. This feature is   almost self--evident, and  it admits  a precise geometric counterpart  in all the modern frameworks for variational calculus. For example, locality can be rendered by using the language of sheaves (see, e.g., \cite{MR3349926}).\par
The analogous property for the relative Euler--Lagrange equations, i.e., the simultaneous locality of the Euler--Lagrange equations \emph{and} the natural boundary conditions, is perhaps less evident. But it   follows from  the fact that the geometric framework for free boundary values variational problem is the most natural generalisation of the standard one. More precisely, the relative Euler--Lagrange equations are local because the relative $\CC$--spectral sequence (or variational bicomplex) used to define them are made of modules and module morphisms. As such, their global behaviour is dictated by their behaviour over any open covering.\par
Let us agree that, if $\partial E=\emptyset$, then a free boundary variational problem in $E$ is just a variational problem on $E$ in the usual sense. This convention allows ut to give the following definition.
\begin{lemma}[Locality]\label{lemLoc}
 Let $(E,\lambda)$ be a free boundary values variational problem, and let $\mathcal{U}=\{U_i\}$ be a covering of $E$ by coordinate patches. Then an admissible curve $L\in\mathcal{A}(E)$ is a solution to $(E,\lambda)$ if and only if each $L\cap U_i$ is a solution to the free boundary value variational problem   $(U_i, \lambda|_{\widetilde{U_i}})$, for all $i$.
\end{lemma}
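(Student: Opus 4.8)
The plan is to reduce the global statement to a purely local one by using the variational characterisation of solutions together with the fact that variations with support in a single coordinate patch are admissible. First I would fix an admissible curve $L\in\mathcal{A}(E)$ and unwind Definition \ref{defMAIN}: $L$ is a solution iff the derivative $\frac{\dd}{\dd\epsilon}\big|_{\epsilon=0}\boldsymbol{S}_\lambda(L_\epsilon)$ vanishes for every admissible variation $\{L_\epsilon\}$ of $L$, i.e.\ every one--parameter family in $\mathcal{A}(E)$ with $L_0=L$. The key observation is that any such variation is generated by a vector field along $L$ which is tangent to $\partial E$ at the endpoints $\partial L$ (this is exactly what keeps $L_\epsilon$ admissible in the sense of Definition \ref{defAdmCurv}), with no constraint on its support otherwise. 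Conversely, given a patch $U_i$ and an admissible variation of $L\cap U_i$ inside $(U_i,\lambda|_{\widetilde{U_i}})$, one can extend the generating vector field by zero outside $U_i$ to get an admissible variation of $L$ in $E$; this uses a cut--off argument, valid because the contact/admissibility conditions are pointwise and are automatically satisfied where the field vanishes.

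The second ingredient is additivity of the action over the covering. Since $\boldsymbol{S}_\lambda(L)=\int_{L^{(2)}}\lambda$ is an integral of a $1$--form and $\mathcal{U}$ is an open cover of $E$, a partition of unity subordinate to $\mathcal{U}$ splits $\lambda=\sum_i \varphi_i\lambda$ and hence $\frac{\delta\boldsymbol{S}_\lambda}{\delta v}(L)=\sum_i \frac{\delta\boldsymbol{S}_{\varphi_i\lambda}}{\delta v}(L)$. More to the point, if a variation $v$ has support in a single $U_i$, then only the contribution over $L^{(2)}\cap\widetilde{U_i}$ survives, so $\frac{\delta\boldsymbol{S}_\lambda}{\delta v}(L)=\frac{\delta\boldsymbol{S}_{\lambda|_{\widetilde{U_i}}}}{\delta v}(L\cap U_i)$. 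Combining this with the two directions of the previous paragraph gives: $L$ is critical in $(E,\lambda)$ $\iff$ the restriction of the variation to each patch vanishes $\iff$ $L\cap U_i$ is critical in $(U_i,\lambda|_{\widetilde{U_i}})$ for every $i$. For the ``only if'' direction one localises a general variation by a partition of unity and uses linearity; for the ``if'' direction one extends patchwise variations by zero and uses additivity. Note that a patch $U_i$ may be an internal patch, in which case $(U_i,\lambda|_{\widetilde{U_i}})$ has empty boundary and the convention agreed just before the statement makes it an ordinary variational problem — here the criticality condition is just the Euler--Lagrange equation $\delta\lambda=0$ restricted to $\widetilde{U_i}$.

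I expect the main obstacle to be the careful handling of boundary patches, specifically checking that extension--by--zero of an admissible variation of $L\cap U_i$ genuinely produces an admissible variation of $L$ in the full manifold $E$, and conversely that an arbitrary admissible variation of $L$ restricts to an admissible variation in each $U_i$. The subtlety is at the endpoints $\partial L\subset\partial E$: the variation field must remain tangent to $\partial E$ there, and one must make sure the cut--off functions used in localisation are chosen so as not to destroy this tangency — e.g.\ by taking cut--offs that are functions of the boundary--defining coordinate only, or that are locally constant near $\partial L$. Once this compatibility of admissible variations with the cover is established, the rest is the routine bilinearity of the first variation and the additivity of integrals under a partition of unity, and the equivalence follows. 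A clean way to package the endpoint issue is to invoke Theorem \ref{thTheoremaLie} and work on $J^2 E$: admissible variations correspond to contact vector fields nondegenerately projecting to $E$ and tangent to $\partial J^2E:=\pi_{2,0}^{-1}(\partial E)$ along $L^{(2)}\cap\partial J^2E$, and both tangency and the contact condition are local, hence compatible with restriction to $\widetilde{U_i}$ and with extension by zero.
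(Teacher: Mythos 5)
Your argument is genuinely different from what the paper does: the paper offers no direct proof of Lemma \ref{lemLoc} (it defers to the literature), and its surrounding discussion justifies locality structurally --- the relative Euler--Lagrange operator is assembled from modules and module morphisms in the relative $\CC$--spectral sequence, hence is a sheaf morphism and local by construction. You instead give the classical analytic argument: identify infinitesimal admissible variations with vector fields along $L$ tangent to $\partial E$ at $\partial L$, decompose a global variation by a partition of unity subordinate to $\mathcal{U}$, extend patchwise variations by zero via cut--offs, and use linearity of the first variation. Both routes are legitimate; the structural one gives the statement in any order and dimension essentially for free once the machinery is in place, while yours is elementary and self--contained but must confront the analytic fine print directly. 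Your observation that scalar cut--offs preserve tangency to $\partial E$, so that the free--endpoint condition survives localisation, is exactly the point that makes the relative case no harder than the classical one.

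Two things need repair. First, your closing sentence assigns the two techniques to the wrong directions: to prove the ``only if'' part (that $L\cap U_i$ is critical in $U_i$ whenever $L$ is critical in $E$) you must take an admissible variation of $L\cap U_i$, cut it off, and \emph{extend it by zero} to a variation of $L$; applying a partition of unity to a variation of $L$ only tells you that the \emph{sum} of the patch contributions vanishes, not each one separately. Conversely, the ``if'' part is the one that needs the partition of unity, to write an arbitrary admissible variation of $L$ as a sum of patch--supported admissible variations and then invoke criticality in each patch. Second, and more substantively, $L\cap U_i$ is in general a non--compact curve with ``artificial ends'' where $L$ leaves $U_i$ through the interior of $E$: the action $\int_{(L\cap U_i)^{(2)}}\lambda$ need not converge there, and the first variation against a variation not vanishing near those ends would produce spurious boundary terms at interior points of $E$, which a solution of $(E,\lambda)$ has no reason to annihilate. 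The equivalence therefore holds only if ``solution of $(U_i,\lambda|_{\widetilde{U_i}})$'' is understood as criticality against variations compactly supported away from the artificial ends while remaining free along $\partial E\cap U_i$; your cut--off construction produces exactly such variations, but this restriction must be built into the notion of solution on $U_i$, otherwise the ``only if'' implication fails as written.
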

\begin{proof}
 See, e.g,  \cite{MR0394755}.
\end{proof}
In particular, one can study natural boundary conditions in a neighbourhood of any point of the boundary $\partial E$.\par
But, since coordinates patches are linked to the Euclidean setting (that is, either $\R^2$ or $[0,\infty[\times\R$) by a diffeomorphism, one needs to check the invariance of the   objects involved in the study. 
\subsection{Lifting of transformations}
In order to speak of invariance, we should  recall how to lift a local diffeomorphism
\begin{eqnarray}
E\supset U & \stackrel{\Phi}{\longrightarrow} & U'\subset E\, ,\label{eqDefPhi}\\
(x,u) &\longmapsto & (\overline{x}(x,u),\overline{u}(x,u))\, ,\nonumber
\end{eqnarray}
to a local diffeomorphisms $\Phi^{(k)}:\widetilde{U}\subset J^kE\longrightarrow\widetilde{U'} \subset J^kE'$, with $k=1,2,3$. The definition of $\Phi^{(k)}$ is obvious (see, e.g., \cite{MR1670044}):
\begin{equation}\label{eqDEfPhik}
\Phi^{(k)}([L]_{\x}^k):=[\Phi(L)]_{\Phi(\x)}^k\, .
\end{equation}
From \eqref{eqDEfPhik} it is obvious that $\Phi^{(k)}$ is well--defined, and that it preserves the curves of the form $L^{(k)}$. By Theorem \ref{thTheoremaLie}, this in turn implies that $\Phi^{(k)}$ preserves the distribution $\CC^{(k)}$. Interestingly enough, the last property is enough to characterise $\Phi^{(k)}$ entirely, as an extension of $\Phi$.\par
If 
\begin{eqnarray}
 J^1E\supset \widetilde{U} & \stackrel{\Phi^{(1)}}{\longrightarrow} & \widetilde{U'}\subset J^1E'\, ,\nonumber\\
(x,u,p) &\longmapsto & (\overline{x}(x,u),\overline{u}(x,u),F(x,u,p))\, ,\label{eqDefPhi1}
\end{eqnarray}
then
\begin{equation}\label{eqLiftCondiz1}
d\overline{u}-Fd\overline{x}\in\Span{du-pdx}\, ,
\end{equation}
the one--dimensional submodule spanned by the contact form \eqref{eqContForm1}. In turn, \eqref{eqLiftCondiz1} means that
\begin{align*}
 d\overline{u}-Fd\overline{x} &= \overline{u}_xdx+\overline{u}_udu-F(\overline{x}_xdx+\overline{x}_udu)\\
 &=(\overline{u}_x-F\overline{x}_x)dx+(\overline{u}_u-F\overline{x}_u)du\\
 &\in\Span{du-pdx}\\
 \Longleftrightarrow&\overline{u}_x-F\overline{x}_x=-p(\overline{u}_u-F\overline{x}_u)\\
 \Longleftrightarrow&F=\frac{\overline{u}_x+p\overline{u}_u}{\overline{x}_x+p\overline{x}_u}=\frac{D^{(1)}(\overline{u})}{D^{(1)}(\overline{x})}\, ,
\end{align*}
i.e., $F$ is uniquely determined by $\overline{x}$, $\overline{u}$, and their \emph{total derivatives}.\par
We recall that
\begin{eqnarray*}
C^\infty(E) &\stackrel{D^{(1)}}{\longrightarrow} & C^\infty(J^1E)\, ,\\
f=f(x,u) &\longmapsto & D^{(1)}(f):=f_x+pf_u\, ,
\end{eqnarray*}
is a derivation of a sub--algebra, i.e., a vector field \emph{along} the projection $\pi_{1,0}$.\par
In analogy with \eqref{eqDefPhi1} we have now
\begin{eqnarray}
 J^2E\supset \widetilde{U} & \stackrel{\Phi^{(2)}}{\longrightarrow} & \widetilde{U'}\subset J^2E'\, ,\nonumber\\
(x,u,p,q) &\longmapsto & (\overline{x},\overline{u},F,G(x,u,p,q))\, ,\label{eqDefPhi2}
\end{eqnarray}
but the condition \eqref{eqLiftCondiz1} is somehow more involved, viz.
\begin{equation}\label{eqLiftCondiz2}
dF-Gd\overline{x}\in\Span{du-pdx,dp-qdx}\, ,
\end{equation}
since also the one--form \eqref{eqContForm2} comes into play. So, 
\begin{align*}
 dF-Gd\overline{x} &= F_xdx+F_udu+F_pdp-G(\overline{x}_xdx+\overline{x}_udu)\\
 &=(F_x-G\overline{x}_x)dx+(F_u-G\overline{x}_u)du+F_pdp\\
 &\in\Span{du-pdx,dp-qdx}
 \end{align*}
 if and only if
\begin{align*} 
 dF-Gd\overline{x} &= A(dp-qdx)+B(du-pdx) \\
 &= -(Aq+Bp)dx+Bdu+Adp \\
\Longleftrightarrow  F_p&=A\, ,\,  F_u-G\overline{x}_u=B\, ,\, F_x-G\overline{x}_x=-(Aq+Bp)\\
 \Longleftrightarrow  F_x-G\overline{x}_x  &=  -F_pq-F_up+G \overline{x}_up \\
  \Longleftrightarrow   G  &=   \frac{F_x+pF_u+qF_p}{\overline{x}_x+p\overline{x}_u}=\frac{D^{(2)}(F)}{D^{(1)}(\overline{x})}\, ,
\end{align*}
where now
\begin{eqnarray*}
C^\infty(J^1E) &\stackrel{D^{(2)}}{\longrightarrow} & C^\infty(J^2E)\, ,\\
f=f(x,y,p) &\longmapsto & D^{(2)}(f):=f_x+pf_u+qf_p\, 
\end{eqnarray*}
is a vector field along $\pi_{2,1}$.\par
Finally, having found both $F$ and $G$, we let
\begin{eqnarray}
 J^3E\supset \widetilde{U} & \stackrel{\Phi^{(3)}}{\longrightarrow} & \widetilde{U'}\subset J^3E'\, ,\nonumber\\
(x,u,p,q,r) &\longmapsto & (\overline{x},\overline{u},F,G,H(x,u,p,q,r))\, ,\label{eqDefPhi3}
\end{eqnarray}
and obtain $H$ as before. More precisely, 
\begin{align*}
 dG-Hd\overline{x} &= G_xdx+G_udu+G_pdp+G_qdq-H(\overline{x}_xdx+\overline{x}_udu)\\
 &=(G_x-H\overline{x}_x)dx+(G_u-H\overline{x}_u)du+G_pdp+G_qdq\\
 &\in\Span{du-pdx,dp-qdx,dq-rdx}
 \end{align*}
 if and only if
\begin{align*} 
 dG-Hd\overline{x} &= A(dq-rdx)+B(dp-qdx)+C(du-pdx) \\
 &= -(Ar+Bq+Cp)dx+Cdu+Bdp+Adq \\
\Longleftrightarrow  G_q&=A\, ,\, G_p=B\, ,\,  G_u-H\overline{x}_u=C\, ,\, G_x-H\overline{x}_x=-(Ar+Bq+Cp)\\
 \Longleftrightarrow  G_x-H\overline{x}_x  &=  -G_qr-G_pq-G_up+H \overline{x}_up \\
  \Longleftrightarrow   H  &=   \frac{G_x+pG_u+qG_p+rG_q}{\overline{x}_x+p\overline{x}_u}=\frac{D^{(3)}(G)}{D^{(1)}(\overline{x})}\, ,
\end{align*}
where finally
\begin{eqnarray*}
C^\infty(J^2E) &\stackrel{D^{(3)}}{\longrightarrow} & C^\infty(J^3E)\, ,\\
f=f(x,y,p,q) &\longmapsto & D^{(3)}(f):=f_x+pf_u+qf_p+rf_q\, .
\end{eqnarray*}
Summing up, $F$, $G$ and $H$ are recursively defined by
\begin{equation}\label{eqDefLifting}
F=\frac{D^{(1)}(\overline{u})}{D^{(1)}(\overline{x})}\, , G=\frac{D^{(2)}(F)}{D^{(1)}(\overline{x})}\, , H=\frac{D^{(3)}(G)}{D^{(1)}(\overline{x})}\, .
\end{equation}
\begin{remark}\label{remInvTrans}
Formulas \eqref{eqDefLifting} can be easily adapted to work with  the inverse of \eqref{eqDefPhi}, namely the  diffeomorphism 
\begin{eqnarray}
E'\supset U' & \stackrel{\Phi^{-1}}{\longrightarrow} & U\subset E\, ,\label{eqDefPhiINV}\\
(\overline{x},\overline{u}) &\longmapsto & ( {x}(\overline{x},\overline{u}), {u}(\overline{x},\overline{u}))\, .\nonumber
\end{eqnarray}
Indeed, it suffices to exchange $x$ and $u$ with $\overline{x}$ and $\overline{u}$, respectively, in \eqref{eqDefLifting} and in the definition of the total derivatives as well.  We shall denote by $f,g,h$ the analogues of $F,G,H$, respectively,
\end{remark}

\subsection{Invariance of the relative Euler--Lagrange equations under   diffeomorphisms}
Once again, for standard Euler--Lagrange equations, the diffeomoprhism invariance is such an innate  feature that it is virtually impossible to recall who observed it in the first place. In   modern theories this property appears   as a consequence of the naturality of the framework itself. Basically, the invariance follows from the fuctorial character of the map associating to any manifold the ``space of Euler--Lagrange expressions'' on it (a sub--quotient of the module $\Omega_{h,c}^2(J^4E) $, in our case). Similarly, the desired invariance of both the Euler--Lagrange equations and the natural boundary conditions follows from similar functorial considerations, where now the underlying manifold has nonempty boundary, and we deal with the ``space  of \emph{relative} Euler--Lagrange  expressions'' instead.
\begin{lemma}[Invariance]\label{lemInv}
 Let $(E,\lambda)$ be a (second--order) free boundary values variational problem, and let $\Phi:E\longrightarrow E'$ be a   diffeomorphism. Then an admissible curve $L\in\mathcal{A}(E)$ is a solution of $(E,\lambda)$ if and only if $\Phi(L)$ is a solution of $(E',\Phi^{(2)\, -1\ast}\lambda)$.
\end{lemma}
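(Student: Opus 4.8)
The plan is to reduce the invariance statement to the already-established facts about lifts of diffeomorphisms and the coordinate-free nature of the action functional. The key observation is that the action $\boldsymbol{S}_\lambda$ in \eqref{eqDefAcFun} is defined purely in terms of integration of a differential form over a jet-prolonged curve, and integration of forms is natural with respect to diffeomorphisms. So the first step is to show that $\boldsymbol{S}_\lambda$ and $\boldsymbol{S}_{\Phi^{(2)\,-1\ast}\lambda}$ are intertwined by the bijection $L\mapsto\Phi(L)$ of admissible curves. For this I would first note that $\Phi$ carries admissible curves to admissible curves: this is because $\Phi$ is a diffeomorphism of $E$ as a manifold with boundary, hence $\Phi(\partial E)=\partial E$ and $\Phi$ preserves both the condition $\partial L=L\cap\partial E$ and the nowhere-tangency to $\partial E$ in Definition \ref{defAdmCurv}. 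Thus $\Phi$ restricts to a bijection $\mathcal{A}(E)\to\mathcal{A}(E')$.

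Next I would establish the compatibility of prolongation with the lift, namely $(\Phi(L))^{(2)}=\Phi^{(2)}(L^{(2)})$. This is immediate from the definition \eqref{eqDEfPhik}: a point of $\Phi^{(2)}(L^{(2)})$ has the form $\Phi^{(2)}([L]^2_{\x})=[\Phi(L)]^2_{\Phi(\x)}$ with $\x\in L$, i.e. $\Phi(\x)$ ranges over $\Phi(L)$, which is exactly the description of $(\Phi(L))^{(2)}$. Granted this, the change-of-variables formula for integration of top-degree forms along $\Phi^{(2)}$ gives
\begin{equation}\label{eqInvProof}
\boldsymbol{S}_{\Phi^{(2)\,-1\ast}\lambda}(\Phi(L))=\int_{(\Phi(L))^{(2)}}\Phi^{(2)\,-1\ast}\lambda=\int_{\Phi^{(2)}(L^{(2)})}\Phi^{(2)\,-1\ast}\lambda=\int_{L^{(2)}}\lambda=\boldsymbol{S}_\lambda(L)\, .
\end{equation}
Here the third equality is the naturality of the integral: pulling back the integration domain along $\Phi^{(2)}$ and the integrand along $(\Phi^{(2)})^{-1}=\Phi^{(2)\,-1}$ cancels. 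One small point to check is that $\Phi^{(2)\,-1\ast}\lambda$ is again a genuine (second-order) Lagrangian on $E'$, i.e. lies in $\Omega^1_h(J^2E')$; this follows because $\Phi^{(2)}$ covers $\Phi$ and respects the tower \eqref{eqTower}, so it carries the submodule generated by $\Omega^1(E)$ to the submodule generated by $\Omega^1(E')$ — this is exactly the content of the explicit formulas \eqref{eqDefLifting} for the lift, which show $\Phi^{(2)}$ is fibred over $\Phi$.

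Finally, since $L\mapsto\Phi(L)$ is a bijection $\mathcal{A}(E)\to\mathcal{A}(E')$ and it conjugates the two action functionals by \eqref{eqInvProof}, it carries critical points to critical points and conversely: an infinitesimal variation of $L$ within $\mathcal{A}(E)$ corresponds, via the differential of $\Phi$, to an infinitesimal variation of $\Phi(L)$ within $\mathcal{A}(E')$, and the derivative of $\boldsymbol{S}_{\Phi^{(2)\,-1\ast}\lambda}$ along the latter equals the derivative of $\boldsymbol{S}_\lambda$ along the former. Hence $L$ is a solution of $(E,\lambda)$ in the sense of Definition \ref{defMAIN} if and only if $\Phi(L)$ is a solution of $(E',\Phi^{(2)\,-1\ast}\lambda)$, which is the assertion. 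I expect the only genuinely delicate point to be the careful bookkeeping of pushforward versus pullback in \eqref{eqInvProof} — in particular making sure the Lagrangian is transported by $\Phi^{(2)\,-1\ast}$ and not $\Phi^{(2)\ast}$, which is dictated precisely by requiring that the jet prolongation $L^{(2)}\subset J^2E$ of the domain be pushed forward to $J^2E'$; everything else is a formal consequence of the functoriality already implicit in Theorem \ref{thTheoremaLie} and the construction of the lift.
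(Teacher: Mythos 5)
Your argument is correct, but it is worth noting that the paper does not actually prove Lemma \ref{lemInv}: it delegates the statement to the literature (\cite{MR739952}), where the invariance is obtained as a by-product of the functoriality of the variational bicomplex / $\mathcal{C}$-spectral sequence machinery, i.e.\ from the naturality of the map sending a manifold to its space of (relative) Euler--Lagrange expressions. You instead argue directly at the level of the action functional \eqref{eqDefAcFun}: $\Phi$ induces a bijection $\mathcal{A}(E)\to\mathcal{A}(E')$, the lift \eqref{eqDEfPhik} satisfies $(\Phi(L))^{(2)}=\Phi^{(2)}(L^{(2)})$, and the change-of-variables formula intertwines $\boldsymbol{S}_\lambda$ with $\boldsymbol{S}_{\Phi^{(2)\,-1\ast}\lambda}$, so critical points correspond. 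This is more elementary and self-contained, and it has the advantage of proving invariance of the \emph{full} relative problem (Euler--Lagrange equations \emph{and} natural boundary conditions at once), since criticality over the whole of $\mathcal{A}(E)$ is what encodes both; the cohomological route buys instead the separate invariance of each summand of the relative Euler--Lagrange expression, which is what the paper ultimately exploits in Section \ref{secSECONDA}. Two small points to tidy up. First, $\Phi(\partial E)=\partial E'$, not $\partial E$. Second, your claim that $\Phi^{(2)\,-1\ast}$ maps $\Omega^1_h(J^2E)$ into $\Omega^1_h(J^2E')$ is not literally true: the pullback of a horizontal form such as $f\,dx$ acquires a contact component, because $dx=(x_{\overline{x}}+\overline{p}\,x_{\overline{u}})\,d\overline{x}+x_{\overline{u}}(d\overline{u}-\overline{p}\,d\overline{x})$; the Lagrangian on $E'$ is the \emph{horizontal projection} of the pullback, which is exactly where the ``total Jacobian'' in \eqref{eqLagBernoulliNEWCOORD} comes from. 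This does not damage your displayed chain of equalities, since the discarded contact component restricts to zero on any prolonged curve $(\Phi(L))^{(2)}$ by Theorem \ref{thTheoremaLie}, so the two integrands give the same action; but the horizontal projection should be made explicit, since otherwise the symbol $\Phi^{(2)\,-1\ast}\lambda$ in the statement of the lemma would not denote a Lagrangian in the sense of the paper's definition.
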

\begin{proof}
 See, e.g.,  \cite{MR739952}.
\end{proof}
Together, Lemma \ref{lemLoc} and Lemma \ref{lemInv} allows one to write down the natural boundary conditions associated to the arbitrary curve $\Gamma$ for the problem of the free--sliding Bernoulli beam, depicted in Figure \ref{fig1}. We stress that none of the techniques above is necessary (nor used) to write down the   Euler--Lagrange equations: they are always of the form \eqref{eqELsemplice} independently on the domain $E$. By Lemma \ref{lemLoc}, one obtains the Euler--Lagrange equations on $E$ simply by restricting those on the whole $\R^2$.\par
But---and this is the point of the whole paper---without the locality and the invariance of the relative Euler--Lagrange operator (that is, without Lemma \ref{lemLoc} and Lemma \ref{lemInv}) it would be impossible to write down the natural boundary conditions determined by $\Gamma$ by using the standard manipulation of the variational integral \eqref{eqEquazVar}.  The final form of these conditions will be considerably uglier  than \eqref{eqBoundCondPiatte} (though geometrically equivalent): this will convince the reader of the difficulty of obtaining them without changing coordinates.

\section{The free--sliding Bernoulli beam}\label{secSECONDA}
\subsection{The two--dimensional family of solutions}
 Now we come back to the problem of the  free--sliding Bernoulli beam formulated in Section \ref{secFormMainProb} and depicted in Figure \ref{fig1}. Let $\boldsymbol{x}\in\Gamma$ be a boundary point, and  let $(\overline{x},\overline{u})$ be a coordinate patch, such that $(\overline{x}(\boldsymbol{x}),\overline{u}(\boldsymbol{x}))=(0,0)$.\par
 Let $L$ be a solution of the Euler--Lagrange equation, i.e., the graph of a function $u=u(x)$ of the form \eqref{eqELsemplice}. If $L$  passes through $\boldsymbol{x}=(\underline{x},\underline{u})$, as rendered in Figure \ref{fig3} below, then $u(\underline{x})=\underline{u}$, that is, 
 \begin{equation}\label{eqEqC1C2C3-1}
 \underline{u}=u_0(\underline{x})+c_3 \underline{x}^3+c_2 \underline{x}^2+c_1\underline{x}+c_0\, .
\end{equation}
Hence, there is a three--parametric family of solution to the \emph{variational problem} $(E,\lambda)$, passing through $\boldsymbol{x}$. In this section we show that such a family becomes one--parametric if $L$ is required to be a solution to the \emph{free boundary values variational problem} $(E,\lambda)$. Hence, since $\boldsymbol{x}$ can ``slide'', so to speak, one--parametrically along $\Gamma$, we obtain a two--dimensional family of solutions, as announced in Section \ref{secFormMainProb}.\par
So, let us assume that $L$ is such a solution. By Lemma  \ref{lemLoc}, in the vicinity of $\boldsymbol{x}$, the curve $L$ is also a solution to the local free boundary values variational problem. Finally, by Lemma \ref{lemInv}, this local problem is equivalent to its pull--back to the target $[0,\infty[\times\R$ of the coordinate patch.\par
Since localisation does not affect the form of the Lagrangian, we take \eqref{eqLagBernoulli} as the local Lagrangian. Then, according to Remark \ref{remInvTrans}, we  express \eqref{eqLagBernoulli} in the coordinates $(\overline{x},\overline{u},\overline{p},\overline{q})$, viz.
\begin{equation}\label{eqLagBernoulliNEWCOORD}
\overline{\lambda}:=\Phi^{(2)\, -1\ast}\lambda =\left(\frac{\kappa}{2}{g(\overline{x},\overline{u},\overline{p},\overline{q})^2}-\rho(x(\overline{x},\overline{u})) u(\overline{x},\overline{u})\right)(x_{\overline{x}}+\overline{p}dx_{\overline{u}})d\overline{x}\, .
\end{equation}
 The term $x_{\overline{x}}+\overline{p}dx_{\overline{u}}$ is the ``total Jacobian'' naturally appearing in the transformation formula for the Lagrangians (see \cite{MR3458999}, Remark 3).\par
 \begin{figure}[h]
\epsfig{file=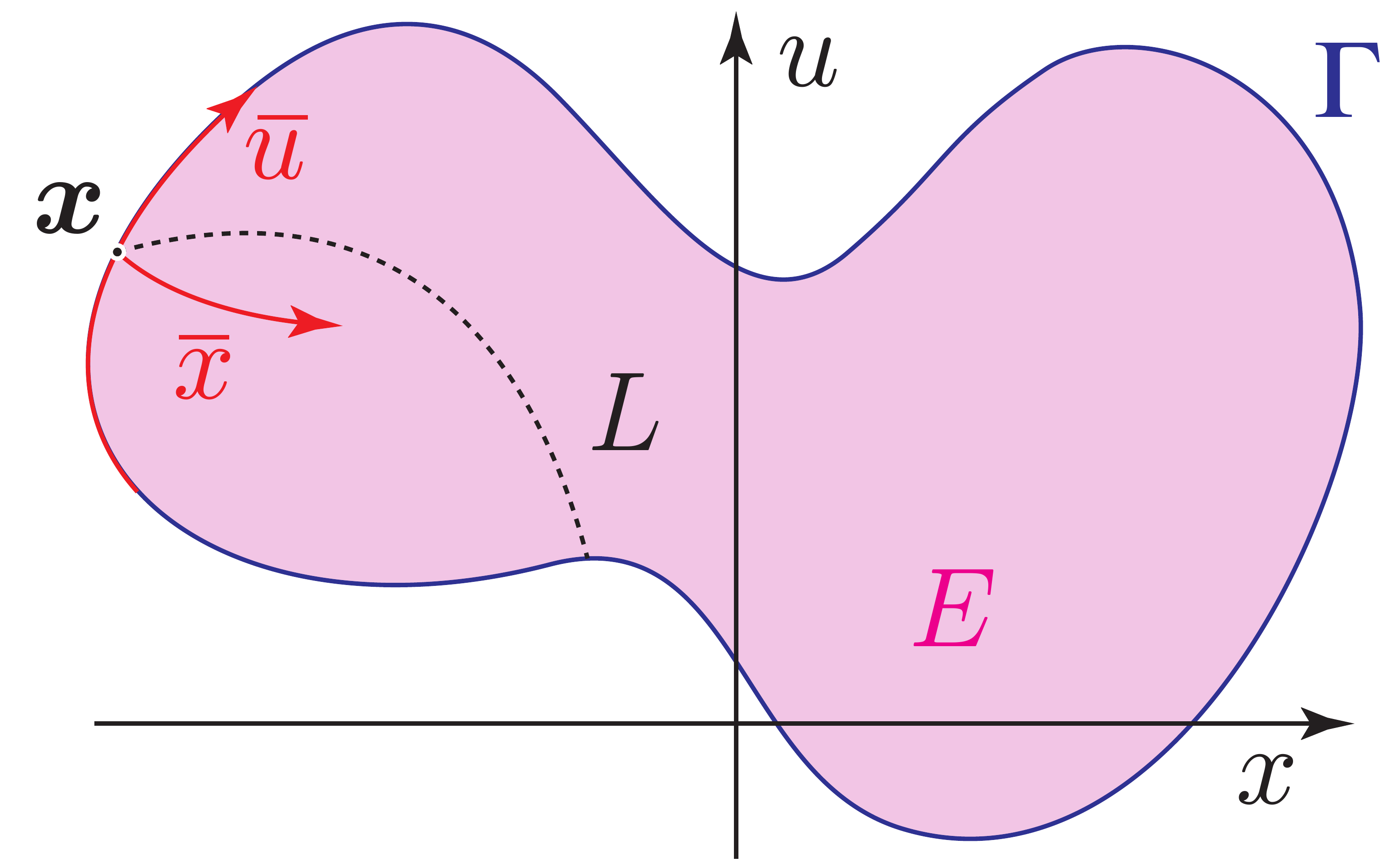,width=0.5\textwidth}\caption{We choose   a point $\boldsymbol{x}\in\Gamma$ and a boundary coordinate patch $(\overline{x},\overline{u})$ in order to write down the natural boundary conditions.\label{fig3}}
\end{figure}
Since the boundary $\Gamma$ ``look straight'' in the coordinates $(\overline{x},\overline{u})$, we can proceed to compute the natural boundary conditions associated to the Lagrangian \eqref{eqLagBernoulliNEWCOORD}, by relying on the same well--known computations used in    Example \ref{exEsempioBanale4}. The analogous of equations \eqref{eqEquazVar} read now
\begin{eqnarray}
\frac{\partial \overline{\lambda}}{\partial \overline{q}}(0)&=&0\, ,\label{eqCondBar1}\\
\left( \frac{\partial\overline{\lambda}}{\partial \overline{p}}-\frac{\dd}{\dd\overline{x}}\frac{\partial \overline{\lambda}}{\overline{q}} \right)(0)&=&0\, .\label{eqCondBar2}
\end{eqnarray}
Straightforward computations show that \eqref{eqCondBar1}  equals
\begin{align}
&k (u_{\overline{x}} x_{\overline{u}}-u_{\overline{u}} x_{\overline{x}}) \big[-x_{\overline{u}} ({\overline{p}}^3 u_{{{\overline{u}}{\overline{u}}}}+{\overline{p}} (2 {\overline{p}}
   u_{{{\overline{x}}{\overline{u}}}}+u_{{{\overline{x}}{\overline{x}}}})-{\overline{q}} u_{\overline{x}})\nonumber\\&+u_{\overline{u}} ({\overline{p}}^3 x_{{{\overline{u}}{\overline{u}}}}+{\overline{p}} (2
   {\overline{p}} x_{{{\overline{x}}{\overline{u}}}}+x_{{{\overline{x}}{\overline{x}}}})-{\overline{q}} x_{\overline{x}})\nonumber\\&+{\overline{p}}^2 u_{\overline{x}} x_{{{\overline{u}}{\overline{u}}}}-{\overline{p}}^2
   u_{{{\overline{u}}{\overline{u}}}} x_{\overline{x}}-2 {\overline{p}} u_{{{\overline{x}}{\overline{u}}}} x_{\overline{x}}+2 {\overline{p}} u_{\overline{x}} x_{{{\overline{x}}{\overline{u}}}}-u_{{{\overline{x}}{\overline{x}}}} x_{\overline{x}}+u_{\overline{x}}
   x_{{{\overline{x}}{\overline{x}}}}\big]=0\, . \label{eqCondBar1-2} 
\end{align}
 Equation \eqref{eqCondBar1-2} is a polynomial equation in the two variables $\overline{p},\overline{q}$, where $x$ and $u$, together with their derivatives, are computed in $(0,0)$. Similarly, \eqref{eqCondBar2} is a polynomial equation in the three variables $\overline{p},\overline{q},\overline{r}$. By solving this system of two equations, one then obtain a one--parametric family $(\overline{p},\overline{q},\overline{r})$, which corresponds precisely to the values at $0$ of the first, second and third derivative of the function $\overline{u}=\overline{u}(\overline{x})$, whose graph is $L$.\par
 Unfortunately, in order to recognise this family as a sub--family of the family of general solutions \eqref{eqELsemplice}, we have to switch back to the original coordinates $(x,u)$, and it would take an entire page to write down   \eqref{eqCondBar2} explicitly. The only way to show a tangible example of the technique proposed here, is to assume that the change of coordinates is linear. This means that all the derivatives of $x$ and $u$ with respect to $\overline{x}$ and  $\overline{u}$ must vanish, so that the equations \eqref{eqCondBar1} and \eqref{eqCondBar2} are considerably simplified, viz.
 \begin{eqnarray}
k {\overline{q}} \left(u_{\overline{x}} x_{\overline{u}}-u_{\overline{u}} x_{\overline{x}}\right){}^2&=&0\, ,\label{eqCondBar1-3}\\
k \left(u_{\overline{x}} x_{\overline{u}}-u_{\overline{u}} x_{\overline{x}}\right){}^2 \left(x_{\overline{u}} \left(5 {\overline{q}}^2-2 {\overline{p}} {\overline{r}}\right) -2 {\overline{r}} x_{\overline{x}}\right)&&\nonumber\\-2 u
   x_{\overline{u}} \rho (x) \left({\overline{p}} x_{\overline{u}}+x_{\overline{x}}\right){}^6&=&0\, .\label{eqCondBar2-3}
\end{eqnarray}
Since
\begin{equation*}
 u_{\overline{x}} x_{\overline{u}}-u_{\overline{u}} x_{\overline{x}}=\left\|\frac{\partial (u,x)}{\partial (\overline{u},\overline{x}) }\right\|\neq 0\, ,
\end{equation*}
the equation \eqref{eqCondBar1-3} admits the unique solution
\begin{equation}\label{eqTRIVIAL}
\overline{q}=0\, .
\end{equation}
By replacing \eqref{eqTRIVIAL}   in \eqref{eqCondBar2-3}, we obtain
\begin{equation}\label{eqLESSTRIVIAL}
k \left(u_{\overline{x}} x_{\overline{u}}-u_{\overline{u}} x_{\overline{x}}\right){}^2 \left(-2 {\overline{p}} {\overline{r}} x_{\overline{u}}-2 {\overline{r}} x_{\overline{x}}\right)-2 u x_{\overline{u}} \rho (x)
   \left({\overline{p}} x_{\overline{u}}+x_{\overline{x}}\right){}^6=0\, .
\end{equation}
It remains to use the formulas \eqref{eqDefLifting} in order to recast \eqref{eqTRIVIAL}  and \eqref{eqLESSTRIVIAL} into the $(x,u)$--coordinates.\par
To begin with observe that \eqref{eqTRIVIAL}  becomes
\begin{equation*}
\left\|\frac{\partial (\overline{u},\overline{x})}{\partial ( {u}, {x}) }\right\|q=0\, ,
\end{equation*}
i.e., again
\begin{equation}\label{eqTRIVIAL1}
q=0\, .
\end{equation}
On the other hand, \eqref{eqLESSTRIVIAL} becomes less trivial, viz.
\begin{align}
-2 \left({\overline{x}}_u \left(p {\overline{u}}_u+{\overline{u}}_x\right)+p {\overline{x}}_u {\overline{x}}_x+{\overline{x}}_x^2\right) \left(k \left({\overline{u}}_x {\overline{x}}_u-{\overline{u}}_u
   {\overline{x}}_x\right){}^3 \left({\overline{x}}_u \left(3 q^2-p r\right)-r {\overline{x}}_x\right)\right.\nonumber\\\left.  +{\overline{u}} {\overline{x}}_u \rho ({{x}}) \left({\overline{x}}_u
   \left(p {\overline{u}}_u+{\overline{u}}_x\right)+p {\overline{x}}_u {\overline{x}}_x+{\overline{x}}_x^2\right){}^5\right)=0\, .\label{eqLESSTRIVIAL2}
\end{align}
The two equations \eqref{eqTRIVIAL1}  and  \eqref{eqLESSTRIVIAL2} must now be coupled with \eqref{eqEqC1C2C3-1}. To this end, we replace
\begin{eqnarray*}
p &=& u'_0(\underline{x})+3c_3\underline{x}^2+2c_2\underline{x}+c_1\, ,\\
q&=&u''_0(\underline{x})+6c_3\underline{x}+2c_2\, ,\\
r&=&u'''_0(\underline{x})+6c_3 \, ,\\
\end{eqnarray*}
into \eqref{eqTRIVIAL1}  and  \eqref{eqLESSTRIVIAL2}, thus obtaining
\begin{equation}
u''_0(\underline{x})+6c_3\underline{x}+2c_2 =0\label{eqDaRisPerC2}
\end{equation}
and
\begin{align}
&-2 ({\overline{x}}_u {\overline{x}}_x (3 \underline{x}^2 c_3+2 \underline{x} c_2+c_1+u'_0(\underline{x}))\nonumber\\&+{\overline{x}}_u
   ({\overline{u}}_u (3 \underline{x}^2 c_3+2 \underline{x}
   c_2+c_1+u'_0(\underline{x}))+{\overline{u}}_x)+{\overline{x}}_x^2)\nonumber\\ & (k ({\overline{u}}_x
   {\overline{x}}_u-{\overline{u}}_u {\overline{x}}_x){}^3 ({\overline{x}}_u (3 (6 \underline{x} c_3+2
   c_2+u''_0(\underline{x})){}^2\nonumber\\&-(6
   c_3+u'''_0(\underline{x})) (3 \underline{x}^2 c_3+2 \underline{x}
   c_2+c_1+u'_0(\underline{x})))-\nonumber\\&(6
   c_3+u'''_0(\underline{x})) {\overline{x}}_x)+{\underline{u}} {\overline{x}}_u \rho ({\overline{x}}) ({\overline{x}}_u {\overline{x}}_x
   (3 \underline{x}^2 c_3+2 \underline{x} c_2+c_1+u'_0(\underline{x}))+\nonumber\\&{\overline{x}}_u ({\overline{u}}_u (3
   \underline{x}^2 c_3+2 \underline{x}
   c_2+c_1+u'_0(\underline{x}))+{\overline{u}}_x)+{\overline{x}}_x^2){}^5)=0\, .\label{eqDaRisPerC0}
\end{align}
From \eqref{eqDaRisPerC2} we obtain
\begin{equation}\label{eqSostC2}
c_2=-3\underline{x}c_3-\frac{1}{2}u''_0(\underline{x})\, ,
\end{equation}
whereas from \eqref{eqEqC1C2C3-1}  and \eqref{eqSostC2} we obtain
\begin{equation}\label{eqSostC0}
c_0=\frac{1}{2}(2\underline{u}-2\underline{x}c_1+4\underline{x}^3c_3+\underline{x}^2u''_0(\underline{x})-2u_0(\underline{x}))\, .
\end{equation}
By replacing both \eqref{eqSostC2} and \eqref{eqSostC0} into \eqref{eqDaRisPerC0} we finally get a single equation
\begin{align}
&(-3 \underline{x}^2 c_3 \overline{x}_u (\overline{u}_u+\overline{x}_x)+\overline{x}_u
   ((u'_0(\underline{x})-\underline{x}
   u''_0(\underline{x})) (\overline{u}_u+\overline{x}_x)+\overline{u}_x)\nonumber\\&+c_1
   \overline{x}_u (\overline{u}_u+\overline{x}_x)+\overline{x}_x^2)\nonumber\\& (\underline{u} \overline{x}_u \rho (x) (-3 \underline{x}^2
   c_3 \overline{x}_u (\overline{u}_u+\overline{x}_x)+\overline{x}_u
   ((u'_0(\underline{x})-\underline{x}
   u''_0(\underline{x})) (\overline{u}_u+\overline{x}_x)+\overline{u}_x)\nonumber\\&+c_1
   \overline{x}_u (\overline{u}_u+\overline{x}_x)+\overline{x}_x^2){}^5-k (6
   c_3+u'''_0(\underline{x})) (\overline{u}_x \overline{x}_u-\overline{u}_u \overline{x}_x){}^3\nonumber\\&
   (\overline{x}_u (-\underline{x} (3 \underline{x}
   c_3+u''_0(\underline{x}))+c_1+u'_0(\underline{x})
   )+\overline{x}_x))=0\label{eqLAAAST}
\end{align}
in $c_1$ and $c_3$. Hence, the family of solutions $L$ to the free boundary values variational problem given by the free--sliding Bernoulli beam, passing through a given point of $\Gamma$, is one--dimensional.
  Notice that in the first--order example discussed in Section \ref{secMot}, such a family is discrete (consists of a single item). In both cases, being $\Gamma$ one--dimensional, the family of \emph{all} solutions has one dimension more. Finally, observe that the dependency of the natural boundary conditions on the ``shape'' of $\Gamma$ is captured by the entries  of the Hessian $\frac{\partial (\overline{u},\overline{x})}{\partial ( {u}, {x}) }$ scattered throughout formula  \eqref{eqLAAAST}.
\subsection{Concluding remarks and perspectives}
In the case of the length functional, we have seen that the actual number of solutions to the free boundary values variational problem rendered in Figure \ref{fig2} is further reduced by the shape of the domain $E$. In that case, it is easy to understand such a reduction because the natural boundary conditions possess an evident geometric interpretation: the solution $L$ must form a right angle with $\partial E$. In the case of a free--sliding Bernoulli beam, the role of the  shape of $E$ in determining the actual family of solutions is more complicated, since higher derivative and more involved formulas (cf.  \eqref{eqLAAAST}) come into play. \par
In this paper we have carried out a local analysis, i.e., we have looked at a portion of $\partial E$, and we have computed the natural boundary conditions therein. Global aspects of the problem, namely the relationship between the  family of global solutions and the geometry of the whole $E$, cannot be addressed without exploiting the corresponding jet--theoretic framework.  This fascinating line of research will be pursued in a future work.

\bibliographystyle{plainnat}
\bibliography{BibUniver}
 
\end{document}